\newtheorem{thm}{Theorem}[section]
\newtheorem{lem}[thm]{Lemma}
\newtheorem{prp}[thm]{Proposition}
\newtheorem{rem}[thm]{Remark}
\theoremstyle{definition}
\newcommand{\scr}[1]{\mathscr #1}
\definecolor{wco}{rgb}{0.5,0.2,0.3}
\numberwithin{equation}{section} \theoremstyle{remark}
\newcommand{\ua}{\uparrow}
\title{{\bf Poincar\'{e} and Weak Poincar\'{e} Inequalities for the Mixed Poisson Measures}}
\author{{\bf Chang-Song Deng}\footnote{Email: dengcs@mail.bnu.edu.cn (C.-S. Deng)}\\
\footnotesize{School of Mathematics and Statistics, Wuhan University, Wuhan 430072, China}}
\date{}
\begin{document}
\def\R{\mathbb R}  \def\ff{\frac} \def\ss{\sqrt} \def\B{\mathbf
B}
\def\N{\mathbb N} \def\kk{\kappa} \def\m{{\bf m}}
\def\dd{\delta} \def\DD{\Delta} \def\vv{\varepsilon} \def\rr{\rho}
\def\<{\langle} \def\>{\rangle} \def\GG{\Gamma} \def\gg{\gamma}
  \def\nn{\nabla} \def\pp{\partial} \def\EE{\scr E} \def\BB{\scr B}
\def\d{\text{\rm{d}}} \def\bb{\beta} \def\aa{\alpha} \def\D{\scr D}
  \def\si{\sigma} \def\ess{\text{\rm{ess}}}
\def\beg{\begin} \def\beq{\begin{equation}}  \def\F{\scr F}
\def\Ric{\text{\rm{Ric}}} \def\Hess{\text{\rm{Hess}}}
\def\e{\text{\rm{e}}} \def\ua{\underline a} \def\OO{\Omega}  \def\oo{\omega}
 \def\tt{\tilde} \def\Ric{\text{\rm{Ric}}}
\def\cut{\text{\rm{cut}}} \def\P{\mathbb P} \def\ifn{I_n(f^{\bigotimes n})}
\def\C{\scr C}      \def\aaa{\mathbf{r}}     \def\r{r}
\def\gap{\text{\rm{gap}}} \def\prr{\pi_{{\bf m},\varrho}}  \def\r{\mathbf r}
\def\Z{\mathbb Z} \def\vrr{\varrho}
\def\L{\scr L}
\def\Tt{\tt} \def\TT{\tt}\def\II{\mathbb I}
\def\i{{\rm in}}\def\Sect{{\rm Sect}}\def\E{\mathbb E} \def\H{\mathbb H}
\def\M{\scr M}\def\Q{\mathbb Q} \def\texto{\text{o}} \def\LL{\Lambda}
\def\TEE{\widetilde{\EE}} \def\B{\mathbf B} \def\i{{\rm i}} \def\supp{{\rm supp}}

\maketitle
\begin{abstract}
By using the Mecke identity, we study a class of birth-death type Dirichlet forms associated with the mixed
Poisson measure. Both Poincar\'{e} and weak Poincar\'{e} inequalities are established, while another Poincar\'{e} type inequality is disproved under some reasonable assumptions.
\end{abstract} \noindent

AMS subject Classification:\ 28C20, 60J80.
\noindent

 Keywords:  Mixed Poisson measure, configuration space, birth-death process, Poincar\'{e} inequality, weak Poincar\'{e} inequality.
 \vskip 2cm

\section{Introduction}

Let $X$ be a locally compact Polish space with Borel $\si$-field $\scr B(X)$ and let $\mu$ be a Radon measure on $(X,\scr B(X))$.
Let $\GG_{X}$ denote the space of all $\Z_+\cup\{\infty\}$-valued Radon measures on $X$. The set of all $\gg\in\GG_X$ such that $\gg(\{x\})\in\{0,1\}$ for all $x\in X$ is called the configuration space
over $X$. For simplicity, we also call $\GG_X$ the configuration space over $X$.
Endow $\GG_X$ with the vague topology, i.e. the weakest topology on $\GG_X$ such that the mapping
$$
\GG_X\ni\gg\mapsto\<\gg,f\>:=\gg(f)=\int_{X}f\,\d\gg \in\R
$$
is continuous for every $f\in C_0(X)$.
Here $C_0(X)$ denotes the space of all continuous functions on $X$ having compact support.
Denote by $\scr B(\GG_X)$ the corresponding Borel $\sigma$-field on $\GG_X$.
The (pure) Poisson measure with intensity $\mu$, denoted by $\pi_\mu$,  is the unique probability measure on $(\GG_X,\scr B(\GG_X))$
with Laplace transform given by
\begin{equation}\label{LT}
\pi_\mu\left(\e^{\<\cdot,f\>}\right)=\exp\left[\int_X(\e^f-1)\,\d\mu\right],\quad f\in L^1(\mu)\cap L^\infty(\mu).
\end{equation}
Another characteristic of the Poisson measure $\pi_\mu$ is that
for any disjoint sets $A_1,\cdots,A_n\in\scr B(X)$ with $\mu(A_i)<\infty$, $1\leq i\leq n$,
$$\pi_\mu\big(\{\gg\in \GG_X; \ \gg(A_i)= k_i, 1\le i\le n\}\big)=
\prod_{i=1}^n \e^{-\mu(A_i)} \ff{\mu(A_i)^{k_i}}{k_i!},\ \ \
k_i\in \Z_+,\ 1\le i\le n.$$

We remark that for $\mu=0$, $\pi_\mu$ is just the Dirac measure on $(\GG_X,\scr B(\GG_X))$ with total mass in
the empty configuration $\gg=0$ (i.e. the zero measure on $X$). We refer to e.g. \cite{AKR98a,Shi94} for a detailed discussion of the construction of the Poisson measure on configuration space.

Recall the birth-death type Dirichlet form associated with the Poisson measure:
\begin{equation*}
\begin{split}
&\EE_{bd}(F,G)=\int_{\GG_X\times X}\big(F(\gg+\delta_x)-F(\gg)\big)\big(G(\gg+\delta_x)-G(\gg)\big)\,\pi_\mu(\d\gg)\mu(\d x),\\
&\D(\EE_{bd})= \{F\in L^2(\pi_\mu);\,\EE_{bd}(F,F)<\infty\}.
\end{split}
\end{equation*}
It is well known that $(\EE_{bd},\D(\EE_{bd}))$  does not satisfy the log-Sobolev inequality (cf. \cite{Sur84}). In the paper \cite{Wu00}, Wu established the
Poincar\'{e} and $L^1$ log-Sobolev inequalities for $(\EE_{bd},\D(\EE_{bd}))$ by
exploring the martingale representation (the counterpart on Poisson space of the Clark-Oc\^{o}ne formula over Wiener space).
Recently, \cite{WY10} presented a new proof of the Poincar\'{e} inequality for $(\EE_{bd},\D(\EE_{bd}))$. Following the line of \cite{WY10},
the authors of this paper confirmed the $L^1$ log-Sobolev inequality for $(\EE_{bd},\D(\EE_{bd}))$ again in \cite{DS11}.

The central aim of the present paper is to extend the known results concerning Poincar\'{e} type inequalities for the birth-death
type Dirichlet forms associated with the Poisson measure to the mixed Poisson measure case.

The mixed Poisson measure is defined by
$$
\pi_{\lambda,\mu}=\int_{\R^+}\pi_{s\mu}\,\lambda(\d s),
$$
where $\lambda$ is a probability measure on $\R^+:=[0,\infty)$. See \cite{AKR98a, Roe98} for more details on the mixed Poisson measure.
In particular, if $\lambda=\delta_1$, then $\pi_{\lambda,\mu}$ reduces to the (pure) Poisson measure $\pi_\mu$. Another interesting
case is the fractional Poisson measure (cf. \cite{OOSM10}).

Now consider the quadric form
\begin{equation*}
\begin{split}
&\EE(F,G):=\int_{\GG_X\times X\times\R^+}\big(F(\gg+\delta_x)-F(\gg)\big)\big(G(\gg+\delta_x)-G(\gg)\big)\,\pi_{s\mu}(\d\gg)s\mu(\d x)\lambda(\d s),\\
&\D(\EE):=\{F\in L^2(\pi_{\lambda,\mu});\,\EE(F,F)<\infty\}.
\end{split}
\end{equation*}
According to Proposition \ref{dform} below, $(\EE,\D(\EE))$ is a conservative symmetric Dirichlet form on $L^2(\pi_{\lambda,\mu})$ provided
$\int_{\R^+}s\,\lambda(\d s)<\infty$. Now we are ready to present our main results.

\begin{thm}\label{PI}
If $\int_{\R^+}s\,\lambda(\d s)<\infty$, $\mu(X)<\infty$ and there exists a constant $C\geq0$ such that
\begin{equation}\label{con1}
\lambda(f^2)-\lambda(f)^2\leq C\int_{\R^+}s|f'(s)|^2\,\lambda(\d s),\quad f\in C^1_b(\R^+),
\end{equation}
then
$$
\pi_{\lambda,\mu}(F^2)-\pi_{\lambda,\mu}(F)^2\leq\big(1+C\mu(X)\big)\EE(F,F),\quad F\in\D(\EE).
$$
\end{thm}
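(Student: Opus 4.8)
The plan is to prove the inequality by a law-of-total-variance decomposition that separates the fluctuation of the Poisson configuration at a fixed intensity level $s$ from the fluctuation of $s$ itself under $\lambda$. Writing $G(s):=\pi_{s\mu}(F)$ for the conditional mean at level $s$, one has the exact identity
\begin{equation*}
\pi_{\lambda,\mu}(F^2)-\pi_{\lambda,\mu}(F)^2
=\int_{\R^+}\big[\pi_{s\mu}(F^2)-\pi_{s\mu}(F)^2\big]\,\lambda(\d s)
+\big[\lambda(G^2)-\lambda(G)^2\big],
\end{equation*}
which follows by inserting $\pm\int_{\R^+}\pi_{s\mu}(F)^2\,\lambda(\d s)$ and using $\pi_{\lambda,\mu}=\int_{\R^+}\pi_{s\mu}\,\lambda(\d s)$. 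I will bound the two terms separately, the first producing the ``$1$'' and the second the ``$C\mu(X)$'' in the final constant.

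For the first term I would invoke the classical Poincar\'{e} inequality for the pure Poisson measure $\pi_{s\mu}$ with sharp constant one, namely
\begin{equation*}
\pi_{s\mu}(F^2)-\pi_{s\mu}(F)^2\leq\int_{\GG_X\times X}\big(F(\gg+\delta_x)-F(\gg)\big)^2\,\pi_{s\mu}(\d\gg)\,s\mu(\d x),
\end{equation*}
which is the spectral-gap statement underlying $(\EE_{bd},\D(\EE_{bd}))$ and is provable via the Mecke identity (or the chaos expansion). Integrating this against $\lambda(\d s)$ reproduces exactly $\EE(F,F)$, so the first term is dominated by $\EE(F,F)$.

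For the second term I would apply hypothesis \eqref{con1} with $f=G$, reducing matters to controlling $\int_{\R^+}s|G'(s)|^2\,\lambda(\d s)$. The key computation is the derivative formula
\begin{equation*}
G'(s)=\frac{\d}{\d s}\,\pi_{s\mu}(F)=\int_{\GG_X\times X}\big(F(\gg+\delta_x)-F(\gg)\big)\,\pi_{s\mu}(\d\gg)\,\mu(\d x),
\end{equation*}
which I would first verify on exponential cylinder functions $F=\e^{\<\cdot,f\>}$ by differentiating the Laplace transform \eqref{LT} in $s$ (its derivative equals $[\int_X(\e^f-1)\,\d\mu]\,\pi_{s\mu}(\e^{\<\cdot,f\>})$, which matches the right-hand side), and then extend by density. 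Since $\mu(X)<\infty$ and $\pi_{s\mu}$ is a probability measure, Cauchy--Schwarz with respect to $\pi_{s\mu}(\d\gg)\mu(\d x)$, a measure of total mass $\mu(X)$, gives
\begin{equation*}
s|G'(s)|^2\leq\mu(X)\int_{\GG_X\times X}\big(F(\gg+\delta_x)-F(\gg)\big)^2\,\pi_{s\mu}(\d\gg)\,s\mu(\d x),
\end{equation*}
and integrating in $s$ against $\lambda$ bounds $\int_{\R^+}s|G'(s)|^2\,\lambda(\d s)$ by $\mu(X)\,\EE(F,F)$. Hence the second term is at most $C\mu(X)\,\EE(F,F)$, and adding the two estimates yields the claim.

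The main obstacle will be the rigorous justification of the derivative formula for $G$ together with the verification that $G$ is regular enough to be admissible in \eqref{con1}, which is stated only for $f\in C^1_b(\R^+)$. I would handle this by first establishing the entire chain for a dense class of bounded cylinder functions $F$, on which $s\mapsto\pi_{s\mu}(F)$ is manifestly smooth and differentiation under the integral sign is licensed by dominated convergence using $\mu(X)<\infty$ and $\int_{\R^+}s\,\lambda(\d s)<\infty$; for such $F$ the function $G$ is smooth with controlled growth, so \eqref{con1} applies. I would then pass to general $F\in\D(\EE)$ by approximation, exploiting that both sides of the asserted inequality are continuous along an $\EE$-convergent sequence.
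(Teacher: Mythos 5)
Your proposal is correct and follows essentially the same route as the paper: the same conditional-variance decomposition, the same use of the sharp Poincar\'{e} inequality for $\pi_{s\mu}$ integrated against $\lambda$ to produce $\EE(F,F)$, and the same application of \eqref{con1} to $G(s)=\pi_{s\mu}(F)$ via the derivative formula and Cauchy--Schwarz with total mass $\mu(X)$. The only minor divergence is in how the derivative formula is justified: the paper obtains it for all bounded measurable $F$ at once from the explicit Radon--Nikodym density $\d\pi_{t\mu}/\d\pi_{s\mu}(\gg)=\exp[\gg(X)\log(t/s)+(s-t)\mu(X)]$ together with the Mecke identity, which sidesteps the Laplace-transform-plus-density extension that you identify as your main obstacle.
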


\begin{rem}
When $\lambda=\delta_1$, obviously (\ref{con1}) holds with $C=0$, and so without the assumption that
$\mu(X)<\infty$ Theorem \ref{PI} reduces to the Poincar\'{e}
inequality for the Poisson measure (see \cite[Remark 1.4]{Wu00}) with the sharp constant $1$ (see \cite{DW10}).
\end{rem}

Next, we consider the weak Poincar\'{e} inequality, which was introduced in \cite{RW01} to describe the non-exponential
convergence rates of Markov semigroups. Let
$$
\|f\|_u=\sup_{s\in\R^+}|f(s)|,\quad\text{and}\quad \|F\|_u=\sup_{\gg\in\GG_X}|F(\gg)|,
$$
where $f$ and $F$ are functions on $\R^+$ and $\GG_X$, respectively.

\begin{thm}\label{WPI}
If $\int_{\R^+}s\,\lambda(\d s)<\infty$, $\mu(X)<\infty$ and there exists $\alpha:(0,\infty)\rightarrow(0,\infty)$ such that
\begin{equation}\label{con2}
\lambda(f^2)-\lambda(f)^2\leq \alpha(r)\int_{\R^+}s|f'(s)|^2\,\lambda(\d s)+r\|f\|_{u}^2,\quad r>0,f\in C^1_b(\R^+),
\end{equation}
then
$$
\pi_{\lambda,\mu}(F^2)-\pi_{\lambda,\mu}(F)^2\leq\big(1+\mu(X)\alpha(r)\big)\EE(F,F)+r\|F\|_{u}^2,\quad r>0,F\in\D(\EE).
$$
\end{thm}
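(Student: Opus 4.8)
The plan is to exploit the two-stage structure of $\pi_{\lambda,\mu}$: a value $s$ is drawn according to $\lambda$ and then a configuration according to $\pi_{s\mu}$. Writing $P(s):=\pi_{s\mu}(F)$ for the conditional mean, the law of total variance gives
\[
\pi_{\lambda,\mu}(F^2)-\pi_{\lambda,\mu}(F)^2=\int_{\R^+}\big(\pi_{s\mu}(F^2)-\pi_{s\mu}(F)^2\big)\,\lambda(\d s)+\big(\lambda(P^2)-\lambda(P)^2\big).
\]
I would bound the two terms separately. For the first, the Poincar\'{e} inequality for the Poisson measure $\pi_{s\mu}$ with sharp constant $1$ yields, for each $s$,
\[
\pi_{s\mu}(F^2)-\pi_{s\mu}(F)^2\leq\int_{\GG_X\times X}\big(F(\gg+\delta_x)-F(\gg)\big)^2\,\pi_{s\mu}(\d\gg)\,s\mu(\d x),
\]
so that integrating against $\lambda$ reproduces exactly $\EE(F,F)$.

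For the second term I first establish the differentiation-in-intensity formula
\[
P'(s)=\frac{\d}{\d s}\pi_{s\mu}(F)=\int_{\GG_X\times X}\big(F(\gg+\delta_x)-F(\gg)\big)\,\pi_{s\mu}(\d\gg)\,\mu(\d x),
\]
which I would derive from the Laplace-transform characterization (\ref{LT}) (equivalently from the Mecke identity) for exponential or cylinder functions and then extend to the relevant $F$ by a monotone-class argument. Applying the Cauchy-Schwarz inequality with respect to the finite measure $\pi_{s\mu}\otimes\mu$, whose total mass is $\mu(X)$, gives
\[
|P'(s)|^2\leq\mu(X)\int_{\GG_X\times X}\big(F(\gg+\delta_x)-F(\gg)\big)^2\,\pi_{s\mu}(\d\gg)\,\mu(\d x),
\]
and hence, after multiplying by $s$ and integrating against $\lambda$,
\[
\int_{\R^+}s|P'(s)|^2\,\lambda(\d s)\leq\mu(X)\,\EE(F,F).
\]
Together with the elementary estimate $\|P\|_u\leq\|F\|_u$, which follows from $|\pi_{s\mu}(F)|\leq\pi_{s\mu}(|F|)\leq\|F\|_u$, this controls the two quantities appearing on the right-hand side of (\ref{con2}).

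Finally I would apply the assumption (\ref{con2}) to $f=P$, obtaining
\[
\lambda(P^2)-\lambda(P)^2\leq\alpha(r)\,\mu(X)\,\EE(F,F)+r\|F\|_u^2,
\]
and add the first-term bound $\EE(F,F)$ to reach $\big(1+\mu(X)\alpha(r)\big)\EE(F,F)+r\|F\|_u^2$, as claimed. The main obstacle is the rigorous justification of the differentiation formula and the attendant verification that $P$ is an admissible test function in (\ref{con2}) (i.e.\ that $P\in C^1_b(\R^+)$); this will likely require first reducing to a dense class of bounded cylinder functions $F$, for which both the differentiation under the integral and the membership $P\in C^1_b(\R^+)$ can be checked directly, and then passing to the limit. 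The argument runs in complete parallel to that of Theorem \ref{PI}, the sole differences being the use of (\ref{con2}) in place of (\ref{con1}) and the supremum-norm estimate $\|P\|_u\leq\|F\|_u$.
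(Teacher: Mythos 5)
Your proposal is correct and follows essentially the same route as the paper: the law-of-total-variance decomposition combined with the sharp Poincar\'{e} inequality for $\pi_{s\mu}$ is exactly the paper's inequality (\ref{mm}), and the treatment of $\lambda(P^2)-\lambda(P)^2$ via the differentiation formula of Lemma \ref{dera}, Cauchy--Schwarz with total mass $\mu(X)$, the bound $\|P\|_u\le\|F\|_u$, and hypothesis (\ref{con2}) applied to $f=P$ is precisely the paper's argument. The only difference is cosmetic: the paper derives Lemma \ref{dera} from the explicit Radon--Nikodym density of $\pi_{t\mu}$ with respect to $\pi_{s\mu}$ (Lemma \ref{abso}) rather than directly from the Laplace transform, and it likewise glosses over the reduction to bounded $F$ that you flag.
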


Finally, we point out that under some assumptions, the following Poincar\'{e} type inequality fails:
\begin{equation}\label{PTI}
\pi_{\lambda,\mu}(F^2)\leq C_1\EE(F,F)+C_2\pi_{\lambda,\mu}(|F|)^2,\quad F\in\D(\EE),
\end{equation}
where $C_1$ and $C_2$ are constants.

\begin{prp}\label{PROP}
Assume that $\int_{\R^+}s^2\,\lambda(\d s)\in(0,\infty)$, and there exists a sequence $\{A_n\}_{n\geq1}\subset\scr B(X)$ such that
$\mu(A_n)>0$ for every $n\geq1$ but $\mu(A_n)\rightarrow0$ as $n\rightarrow\infty$ (it is the case if $\mu$ does not have atom). If $C_1<1$, then (\ref{PTI})
fails for any $C_2$.
\end{prp}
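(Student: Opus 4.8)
The plan is to refute (\ref{PTI}) by testing it against the sequence of linear functionals
\[
F_n(\gg)=\<\gg,\mathbf{1}_{A_n}\>=\gg(A_n),\qquad n\geq1,
\]
which probe the configuration only on the small sets $A_n$. The idea is that along this family the three terms of (\ref{PTI}) scale differently in $\mu(A_n)$, and letting $\mu(A_n)\to0$ will leave a strictly positive quantity on the left dominated by a vanishing one on the right. First I would fix notation for the two relevant moments, $m_1=\int_{\R^+}s\,\lambda(\d s)$ and $m_2=\int_{\R^+}s^2\,\lambda(\d s)$. By hypothesis $m_2\in(0,\infty)$; since $\lambda$ is a probability measure, Cauchy--Schwarz gives $m_1\leq\ss{m_2}<\infty$, and $m_2>0$ forces $m_1>0$, so $m_1\in(0,\infty)$. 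As $\mu(A_n)\to0$, we have $\mu(A_n)<\infty$ for all large $n$, which is all that the computations below require.

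Next I would evaluate each term of (\ref{PTI}) for $F=F_n$, using the two descriptions of the Poisson measure recalled in the introduction. Under $\pi_{s\mu}$ the variable $\gg(A_n)$ is Poisson with parameter $s\mu(A_n)$, so $\pi_{s\mu}(\gg(A_n))=s\mu(A_n)$ and $\pi_{s\mu}(\gg(A_n)^2)=s\mu(A_n)+s^2\mu(A_n)^2$. Integrating these against $\lambda$ yields
\[
\pi_{\lambda,\mu}(F_n)=m_1\mu(A_n),\qquad \pi_{\lambda,\mu}(F_n^2)=m_1\mu(A_n)+m_2\mu(A_n)^2,
\]
and since $F_n\geq0$ we also get $\pi_{\lambda,\mu}(|F_n|)^2=m_1^2\mu(A_n)^2$. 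For the energy, the difference operator acts as $F_n(\gg+\dd_x)-F_n(\gg)=\mathbf{1}_{A_n}(x)$, which is independent of $\gg$, so the $\pi_{s\mu}$-integral is trivial and
\[
\EE(F_n,F_n)=\int_{\R^+}\int_X\mathbf{1}_{A_n}(x)\,s\mu(\d x)\lambda(\d s)=m_1\mu(A_n)<\infty;
\]
in particular $F_n\in\D(\EE)$.

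Finally I would substitute these values into (\ref{PTI}), divide through by $\mu(A_n)>0$, and rearrange to the equivalent statement
\[
(1-C_1)m_1\leq\mu(A_n)\big(C_2m_1^2-m_2\big).
\]
The left-hand side is a strictly positive constant, because $C_1<1$ and $m_1>0$; the right-hand side is a fixed multiple of $\mu(A_n)$ and hence tends to $0$ as $n\to\infty$ (whatever the sign of $C_2m_1^2-m_2$). For $n$ large enough the inequality is therefore violated, so (\ref{PTI}) cannot hold with the given $C_1$ and any choice of $C_2$. The crux of the argument is precisely this mismatch of orders: both $\EE(F_n,F_n)$ and the leading part of the variance scale like $\mu(A_n)$, whereas the $L^1$-type term $\pi_{\lambda,\mu}(|F_n|)^2$ scales like $\mu(A_n)^2$, making it too weak to absorb the positive deficit left by $C_1<1$. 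I do not expect any genuine obstacle beyond carefully verifying the moment bookkeeping that secures $m_1\in(0,\infty)$ and the membership $F_n\in\D(\EE)$.
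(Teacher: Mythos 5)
Your proposal is correct and follows essentially the same route as the paper: test (\ref{PTI}) on $F_n(\gg)=\gg(A_n)$, compute the first and second moments and the energy (the paper packages the moment identities as Lemma \ref{ss} for general nonnegative $f$, proved via the Laplace transform, while you read them off from the Poisson law of $\gg(A_n)$ under $\pi_{s\mu}$), and derive the contradiction from the mismatch of orders $\mu(A_n)$ versus $\mu(A_n)^2$ as $n\to\infty$.
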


The remainder of the paper is organized as follows. In Section 2, we discuss the birth-death type Dirichlet form $(\EE,\D(\EE))$. The proofs of the
main results are addressed in Section 3.

\section{Birth-death type Dirichlet form}

In this section, we shall follow the line of \cite{DW10} to characterize the form $(\EE, \D(\EE))$. First of all, recall
the Mecke identity \cite{Mec67} (see
also \cite{Roe98}):
\begin{equation}\label{mecke}
\int_{\GG_X\times X} H(\gg+\delta_x,
x)\,\pi_\mu(\d\gg)\mu(\d x)= \int_{\GG_X\times X} H(\gg,x)\,\gg(\d x)\pi_\mu(\d\gg)
\end{equation}
holds for any measurable function $H$ on
$\GG_X\times X$ such that one of the above integrals exists.

For $A\in\scr B(\GG_X)$, let
$$
\tilde{A}=\{(\gg,x)\in\GG_X\times X;\ \gg+\delta_x\in A\}.
$$
We first prove the following quasi-invariant property of the mapping
$\gg\mapsto\gg+\delta_x$.

\begin{lem}\label{quasi}
If $\pi_{\lambda,\mu}(A)=0$, then
$$
\int_{\tilde{A}\times\R^+}\pi_{s\mu}(\d\gg)s\mu(\d x)\lambda(\d s)=0.
$$
\end{lem}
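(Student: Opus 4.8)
The plan is to reduce the statement to the pure Poisson case by disintegrating $\pi_{\lambda,\mu}$ over $\lambda$ and then invoking the Mecke identity (\ref{mecke}) separately for each intensity $s\mu$. The whole argument rests on the observation that the indicator of the ``shifted'' set $\tilde A$ is precisely $\gg\mapsto\mathbf 1_A(\gg+\delta_x)$, which is exactly the kind of integrand that the Mecke identity transforms.

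First I would exploit the definition $\pi_{\lambda,\mu}=\int_{\R^+}\pi_{s\mu}\,\lambda(\d s)$. Since $0=\pi_{\lambda,\mu}(A)=\int_{\R^+}\pi_{s\mu}(A)\,\lambda(\d s)$ and each $\pi_{s\mu}(A)\geq0$, it follows that $\pi_{s\mu}(A)=0$ for $\lambda$-a.e. $s\in\R^+$.

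Next, fix such an $s$ and apply the Mecke identity (\ref{mecke}) with $\mu$ replaced by the Radon measure $s\mu$ (and hence $\pi_\mu$ by $\pi_{s\mu}$) to the nonnegative measurable function $H(\gg,x)=\mathbf 1_A(\gg)$. Since $H(\gg+\delta_x,x)=\mathbf 1_A(\gg+\delta_x)=\mathbf 1_{\tilde A}(\gg,x)$ by the very definition of $\tilde A$, the left-hand side becomes $\int_{\tilde A}\pi_{s\mu}(\d\gg)s\mu(\d x)$, while the right-hand side equals $\int_{\GG_X}\mathbf 1_A(\gg)\gg(X)\,\pi_{s\mu}(\d\gg)=\int_A\gg(X)\,\pi_{s\mu}(\d\gg)$. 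Because one integrates the nonnegative (possibly infinite-valued) function $\gg\mapsto\gg(X)$ over the $\pi_{s\mu}$-null set $A$, this right-hand side vanishes. Hence
$$\int_{\tilde A}\pi_{s\mu}(\d\gg)s\mu(\d x)=0\quad\text{for }\lambda\text{-a.e. }s\in\R^+.$$

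Finally I would integrate this identity against $\lambda(\d s)$ and appeal to Tonelli's theorem, which applies since the integrand is nonnegative and $\tilde A$ is $\scr B(\GG_X)\otimes\scr B(X)$-measurable; this yields exactly $\int_{\tilde A\times\R^+}\pi_{s\mu}(\d\gg)s\mu(\d x)\lambda(\d s)=0$. The only points requiring care are the measurability of $\tilde A$ (so that the triple integral makes sense and Tonelli applies) and the legitimacy of using (\ref{mecke}) with intensity $s\mu$; both are routine, the latter because $s\mu$ is again a Radon measure for every $s\geq0$, with the degenerate case $s=0$ being trivial owing to the factor $s$ in $s\mu(\d x)$. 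I do not expect a genuine obstacle here: the essential content is the single application of the Mecke identity, which converts the shifted event $\tilde A$ back into the original $\pi_{s\mu}$-null event $A$.
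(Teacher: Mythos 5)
Your proposal is correct and follows essentially the same route as the paper: a single application of the Mecke identity with intensity $s\mu$ to $H(\gg,x)=\mathbf 1_A(\gg)$, converting the integral over $\tilde A$ into $\int_A\gg(X)\,\pi_{s\mu}(\d\gg)$, which vanishes because $A$ is null. The only cosmetic difference is that you first extract $\pi_{s\mu}(A)=0$ for $\lambda$-a.e.\ $s$ and conclude pointwise in $s$, whereas the paper keeps the $\lambda$-integral throughout and concludes from $\int_A\gg(X)\,\pi_{\lambda,\mu}(\d\gg)=0$ at the end.
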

\begin{proof}
By the Mecke identity (\ref{mecke}) for $H(\gg,\cdot)=1_A(\gg)$ and using the definition of $\pi_{\lambda,\mu}$, we have
\begin{eqnarray*}
\begin{aligned}
\int_{\tilde{A}\times\R^+}\pi_{s\mu}(\d\gg)s\mu(\d x)\lambda(\d s)
&=\int_{\R^+}\lambda(\d s)\int_{\GG_X\times X}1_A(\gg+\delta_x)\,\pi_{s\mu}(\d\gg)s\mu(\d x)\\
&=\int_{\R^+}\lambda(\d s)\int_{\GG_X\times X}1_A(\gg)\,\gg(\d x)\pi_{s\mu}(\d\gg)\\
&=\int_{\R^+}\lambda(\d s)\int_{A}\gg(X)\,\pi_{s\mu}(\d\gg)\\
&=\int_{A}\gg(X)\,\pi_{\lambda,\mu}(\d\gg)=0.
\end{aligned}
\end{eqnarray*}
\end{proof}

Define the family of cylindrical functions
$$
\F_\mu^C=\big\{\gg\mapsto f(\gg(h_1),\cdots,\gg(h_m));\ m\geq1,f\in C_b^1(\R^m),h_i\in L^1(\mu)\cap L^\infty(\mu)\big\}.
$$

\begin{prp}\label{dform}
Assume that $\int_{\R^+}s\,\lambda(\d s)<\infty$. Then $(\EE,\D(\EE))$ is a conservative symmetric Dirichlet form on $L^2(\pi_{\lambda,\mu})$ with $\D(\EE)\supset\F_\mu^C$.
\end{prp}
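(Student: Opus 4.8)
The plan is to verify in turn the four defining properties of a Dirichlet form: that $(\EE,\D(\EE))$ is \emph{well-defined} (the integral is finite on a dense class), \emph{symmetric and bilinear}, \emph{closed} (so that $\D(\EE)$ is a Hilbert space under the norm $\EE(F,F)+\pi_{\lambda,\mu}(F^2)$), \emph{Markovian}, and finally \emph{conservative} (i.e. $1\in\D(\EE)$ with $\EE(1,1)=0$). The symmetry and bilinearity are immediate from the symmetric product structure of the integrand, so the substance lies in the finiteness on cylinder functions, closability, and the Markov property.

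First I would show $\F_\mu^C\subset\D(\EE)$. Fix $F=f(\gg(h_1),\dots,\gg(h_m))$ with $f\in C^1_b$ and $h_i\in L^1(\mu)\cap L^\infty(\mu)$. The discrete gradient is
\begin{equation*}
F(\gg+\dd_x)-F(\gg)=f\big(\gg(h_1)+h_1(x),\dots,\gg(h_m)+h_m(x)\big)-f\big(\gg(h_1),\dots,\gg(h_m)\big),
\end{equation*}
and since $f$ has bounded first derivatives, the mean value theorem bounds this by $\|\nn f\|_u\sum_{i=1}^m|h_i(x)|$. Hence $|F(\gg+\dd_x)-F(\gg)|^2\le C_f\big(\sum_i|h_i(x)|\big)^2$, a function of $x$ alone, integrable against $\mu$ because each $h_i\in L^1(\mu)\cap L^\infty(\mu)$ forces $|h_i|^2\in L^1(\mu)$ and cross terms likewise. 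Integrating in $\gg$ gives a factor $\pi_{s\mu}(\GG_X)=1$, and the remaining $s\,\mu(X)$-type factor is controlled after integrating in $s$ precisely by the hypothesis $\int_{\R^+}s\,\lambda(\d s)<\infty$; this is where that moment assumption is consumed. This yields $\EE(F,F)<\infty$, so $\F_\mu^C\subset\D(\EE)$, which is dense in $L^2(\pi_{\lambda,\mu})$.

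The main obstacle is \textbf{closability}. The clean way to handle it is to recognize $\EE$ as the Dirichlet form of a symmetric jump kernel and appeal to Lemma \ref{quasi}. Concretely, one should exhibit a symmetric measure on $\GG_X\times\GG_X$ whose associated jump form is $\EE$: the birth–death structure means jumps $\gg\mapsto\gg+\dd_x$, and Lemma \ref{quasi} is exactly the statement that the "birth" reference measure $\int_{\R^+}\pi_{s\mu}(\d\gg)s\mu(\d x)\lambda(\d s)$ is absolutely continuous with respect to $\pi_{\lambda,\mu}$ pushed forward under $\gg\mapsto\gg+\dd_x$, which guarantees the form is well-defined on $L^2$-equivalence classes and, combined with the Mecke identity, makes it symmetric. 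Closability then follows from the general theory of such jump-type forms (as in the Poisson case treated in \cite{DW10}): one verifies that if $F_n\to0$ in $L^2(\pi_{\lambda,\mu})$ and $F_n$ is $\EE$-Cauchy, then $\EE(F_n,F_n)\to0$, using that the gradient $F\mapsto\big(F(\gg+\dd_x)-F(\gg)\big)$ is a closed operator into $L^2\big(\pi_{s\mu}(\d\gg)s\mu(\d x)\lambda(\d s)\big)$ by Fatou's lemma along a subsequence converging pointwise. I would follow \cite{DW10} closely here, since the quasi-invariance in Lemma \ref{quasi} is the precise analogue of what is used there.

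Finally, the Markov property is routine: for any normal contraction $T$ (e.g. $T_\vv$ the unit approximation), the inequality $|T\!\circ\! F(\gg+\dd_x)-T\!\circ\! F(\gg)|\le|F(\gg+\dd_x)-F(\gg)|$ holds pointwise, so $\EE(T\!\circ\! F,T\!\circ\! F)\le\EE(F,F)$, giving the Markovian property directly. Conservativeness is immediate because the constant function $1$ satisfies $1(\gg+\dd_x)-1(\gg)=0$, whence $1\in\D(\EE)$ and $\EE(1,1)=0$. Assembling these—denseness of $\F_\mu^C$, symmetry via Mecke, closability via Lemma \ref{quasi}, the Markov contraction estimate, and $\EE(1,1)=0$—establishes that $(\EE,\D(\EE))$ is a conservative symmetric Dirichlet form with $\D(\EE)\supset\F_\mu^C$.
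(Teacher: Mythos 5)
Your proposal is correct and follows essentially the same route as the paper: well-definedness on $L^2$-equivalence classes via Lemma \ref{quasi}, the mean value theorem bound for cylinder functions (consuming the hypothesis $\int_{\R^+}s\,\lambda(\d s)<\infty$), a Fatou argument along an a.e.\ convergent subsequence for the closedness of the form, and the trivial normal-contraction and conservativeness checks. The only quibble is terminological: since $\D(\EE)$ is defined as the maximal domain $\{F\in L^2(\pi_{\lambda,\mu}):\EE(F,F)<\infty\}$, what must be verified is closedness rather than closability, but the Fatou/subsequence argument you sketch is exactly what the paper uses for that step.
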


\begin{proof}
Due to Lemma \ref{quasi}, $(\EE,\D(\EE))$ is well-defined on $L^2(\pi_{\lambda,\mu})$; that is, the value of $\EE(F,G)$
does not depend on $\pi_{\lambda,\mu}$-versions of $F$ and $G$. Note that $\F_\mu^C$ is dense in $L^2(\pi_{\lambda,\mu})$
and by the definition of $\EE$, the normal contractivity property is trivial.
Therefore, we only need to prove $\F_\mu^C\subset\D(\EE)$ and the closed property of $(\EE,\D(\EE))$.
Let's prove these two points separately.

\medskip

(1) For any $h\in L^1(\mu)\cap L^\infty(\mu)$, noting that
\begin{eqnarray*}
\begin{aligned}
\int_{\GG_X}|\gg(h)|\,\pi_{\lambda,\mu}(\d\gg)&\leq\int_{\R^+}\lambda(\d s)\int_{\GG_X}\gg(|h|)\,\pi_{s\mu}(\d\gg)\\
&=\mu(|h|)\int_{\R^+}s\,\lambda(\d s)<\infty,
\end{aligned}
\end{eqnarray*}
one has $\gg(h)\in\R$ for $\pi_{\lambda,\mu}$-a.e. $\gg\in\GG_X$. Therefore,
$$
F(\gg):=f(\gg(h_1),\cdots,\gg(h_m))\in\F_\mu^C
$$
is well-defined in $L^2(\pi_{\lambda,\mu})$. By the Mean Value Theorem and noting that $h_i\in L^1(\mu)\cap L^\infty(\mu)$, we arrive at
\begin{eqnarray*}
\begin{aligned}
\EE(F,F)&=\int_{\GG_X\times X\times\R^+}\big[f(\gg(h_1)+h_1(x),\cdots,\gg(h_m)+h_m(x))\\
&\qquad\qquad\qquad-f(\gg(h_1),\cdots,\gg(h_m))\big]\,
\pi_{s\mu}(\d\gg)s\mu(\d x)\lambda(\d s)\\
&\leq\|\nabla f\|_\infty^2\int_{\GG_X\times X\times\R^+}\sum_{i=1}^mh_i(x)^2\,\pi_{s\mu}(\d\gg)s\mu(\d x)\lambda(\d s)\\
&=\|\nabla f\|_\infty^2\left(\sum_{i=1}^m\mu(h_i^2)\right)\int_{\R^+}s\,\lambda(\d s)\\
&\leq\|\nabla f\|_\infty^2\left(\sum_{i=1}^m\|h_i\|_\infty\mu(|h_i|)\right)\int_{\R^+}s\,\lambda(\d s)\\
&<\infty.
\end{aligned}
\end{eqnarray*}
Thus, $\F_\mu^C\subset\D(\EE)$.

\medskip

(2) Let $\{F_n\}_{n\geq1}$ be an $\EE_1^{1/2}$-Cauchy sequence, where
$$
\EE_1^{1/2}(F,F):=\sqrt{\EE(F,F)+\pi_{\lambda,\mu}(|F|^2)},\quad F\in\D(\EE).
$$
Since $\{F_n\}_{n\geq1}$ is a Cauchy sequence in $L^2(\pi_{\lambda,\mu})$, there exists $F\in L^2(\pi_{\lambda,\mu})$
such that $F_n\rightarrow F$ in $L^2(\pi_{\lambda,\mu})$. Consequently, we can choose a subsequence $\{F_{n_k}\}_{k\geq1}$
such that $F_{n_k}\rightarrow F$ $\pi_{\lambda,\mu}$-a.e. Then it follows from Lemma \ref{quasi} that
$F_{n_k}(\gg+\delta_x)\rightarrow F(\gg+\delta_x)$
for $\left(\int_{\R^+}\pi_{s\mu}s\lambda(\d s)\right)\times\mu$-a.e. $(\gg,x)\in\GG_X\times X$. So, we obtain from the Fatou
Lemma that
\begin{eqnarray*}
\begin{aligned}
&\EE(F_n-F,F_n-F)\\
&=\int_{\Gamma_X\times X\times\R^+}\liminf_{k\rightarrow\infty}\big[(F_n-F_{n_k})(\gg+\delta_x)-(F_n-F_{n_k})(\gg)\big]^2
\,\pi_{s\mu}(\d\gg)s\mu(\d x)\lambda(\d s)\\
&\leq\liminf_{k\rightarrow\infty}\EE(F_n-F_{n_k},F_n-F_{n_k}).
\end{aligned}
\end{eqnarray*}
Since $\{F_n\}_{n\geq1}$ is an $\EE_1^{1/2}$-Cauchy sequence, this implies that
\begin{equation}\label{tt}
\lim_{n\rightarrow\infty}\EE_1^{1/2}(F_n-F,F_n-F)=0.
\end{equation}
On the other hand, using the Fatou Lemma again, we get
\begin{eqnarray*}
\begin{aligned}
\EE(F,F)&=\int_{\Gamma_X\times X\times\R^+}\liminf_{k\rightarrow\infty}(F_{n_k}(\gg+\delta_x)-F_{n_k}(\gg))^2
\,\pi_{s\mu}(\d\gg)s\mu(\d x)\lambda(\d s)\\
&\leq\liminf_{k\rightarrow\infty}\EE(F_{n_k},F_{n_k})\\
&<\infty
\end{aligned}
\end{eqnarray*}
since $\{F_{n_k}\}_{k\geq1}$ is an $\EE^{1/2}$-Cauchy sequence. Combining this with (\ref{tt}), we conclude
that $F\in\D(\EE)$ and $F_n\rightarrow F$ in $\D(\EE)$ as $n\rightarrow\infty$. Hence, $(\EE,\D(\EE))$ is
closed and the proof is now completed.
\end{proof}

Now we move on to study the regularity of the
Dirichlet form. Note that $\GG_X$ is a Polish space
under the vague topology (see \cite[Proposition 3.17]{Res87}). Since the probability measure $\pi_{\lambda,\mu}$ on the Polish space $\GG_X$ is tight, we can
choose an increasing sequence $\{K_n\}_{n\ge 1}$ consisting of compact subsets of $\GG_X$ such that $\pi_{\lambda,\mu}(K_n^c)\leq1/n$
for any $n\geq1$.
Then $\pi_{\lambda,\mu}$ has full measure on $\GG_X^\mu:=\bigcup_{n=1}^\infty K_n$, which is a locally compact separable metric space.

\beg{prp}   If $\mu(X)<\infty$ and $\int_{\R^+}s\lambda(\d s)<\infty$,
then  $(\EE,\D(\EE))$ is a regular
Dirichlet form on $L^2(\GG_X^\mu; \pi_{\lambda,\mu})$. \end{prp}
\begin{proof}
Since $\mu(X)<\infty$, it is easy to see that $\BB_b(\GG_X^\mu)\subset\D(\EE)$, where $\BB_b(\GG_X^\mu)$ is the set of all
bounded measurable functions on $\GG_X^\mu$. In particular,
$C_0(\GG_X^\mu)\subset\D(\EE)$. Thus, it suffices to prove that
$C_0(\GG_X^\mu)$ is dense in $\D(\EE)$ w.r.t. the $\EE_1$-norm, i.e. for
any $F\in\D(\EE)$, one may find a sequence $\{F_n\}_{n\geq1}\subset
C_0(\GG_X^\mu)$ such that $\EE_1(F_n-F,F_n-F)\rightarrow0$ as
$n\rightarrow\infty$.

\medskip

Since $\BB_b(\GG_X^\mu)\cap\D(\EE)$ is dense in $\D(\EE)$ (see e.g. \cite[Proposition I.4.17]{MR92}), we may assume that $F\in \BB_b(\GG_X^\mu)$.
Moreover, since $C_0(\GG_X^\mu)$ is dense in $L^2(\GG_X^\mu;\pi_{\lambda,\mu})$, we may find a sequence
$\{F_n\}_{n\geq1}\subset C_0(\GG_X^\mu)$ such that
$\sup_{n\in\mathbb{N}}\|F_n\|_{L^\infty(\pi_{\lambda,\mu})}\le \|F\|_{L^\infty(\pi_{\lambda,\mu})}$ and
$\pi_{\lambda,\mu}(|F_n-F|^2)\to 0$ as $n\to\infty.$  Without loss of
generality, we assume furthermore that $F_n\rightarrow F$
$\pi_{\lambda,\mu}$-a.e. By Lemma \ref{quasi}, $F_n(\gg+\dd_x)\to F(\gg+\dd_x)$ and
$(F_n-F)^2(\gg+\delta_x)\leq(\|F_n\|_{L^\infty(\pi_{\lambda,\mu})}+\|F\|_{L^\infty(\pi_{\lambda,\mu})})^2\leq4\|F\|_{L^\infty(\pi_{\lambda,\mu})}^2$ for
$\left(\int_{\R^+}\pi_{s\mu}s\lambda(\d s)\right)\times\mu$-a.e. $(\gg,x)\in\GG_X\times X$.

\medskip

Note that (we do not have to distinguish integrals on $\GG_X^\mu$ and $\GG_X$ since $\pi_{\lambda,\mu}(\GG\setminus\GG_X^\mu)=0$)
\begin{eqnarray*}
\begin{aligned}
&\EE(F_n-F,F_n-F)\\
&\leq2\int_{\GG_X\times X\times\R^+}(F_n-F)^2(\gg+\dd_x)\,\pi_{s\mu}(\d\gg)s\mu(\d x)\lambda(\d s)\\
&\quad+2\int_{\GG_X\times X\times\R^+}(F_n-F)^2(\gg)\,\pi_{s\mu}(\d\gg)s\mu(\d x)\lambda(\d s).
\end{aligned}
\end{eqnarray*}
By the dominated convergence theorem we obtain
$$\lim_{n\to\infty} \EE(F_n-F, F_n-F)=0.$$
Combining this with $\pi_{\lambda,\mu}(|F_n-F|^2)\rightarrow0$, we conclude that
$$\lim_{n\rightarrow\infty}\EE_1(F_n-F,F_n-F)=0,$$
which completes the proof.
\end{proof}

\section{Proofs of Theorems \ref{PI} and \ref{WPI} and Proposition \ref{PROP}}

To prove Theorem \ref{PI}, let's first make some preparations.
\begin{lem}\label{abso}
If $\mu(X)<\infty$, then for any $s,t\in\R^+$, $\pi_{s\mu}$ and $\pi_{t\mu}$ are mutually absolutely continuous.
More precisely,
$$
\frac{\d\pi_{t\mu}}{\d\pi_{s\mu}}(\gg)=\exp\left[\gg(X)\log\frac{t}{s}+(s-t)\mu(X)\right]
$$
for $\pi_{t\mu}$-a.e. (and so also $\pi_{s\mu}$-a.e.) $\gg\in\GG_X$.
\end{lem}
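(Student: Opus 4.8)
The plan is to verify the asserted Radon--Nikodym density directly, using the Laplace transform characterization (\ref{LT}) of the Poisson measure. Throughout I take $s,t\in(0,\infty)$, since the formula involves $\log\ff{t}{s}$; the degenerate cases $s=0$ or $t=0$ give the Dirac mass at the empty configuration and are excluded. Define the candidate density
$$
\rho(\gg):=\exp\left[\gg(X)\log\ff{t}{s}+(s-t)\mu(X)\right],\quad\gg\in\GG_X,
$$
and let $\nu$ be the measure on $(\GG_X,\scr B(\GG_X))$ given by $\d\nu=\rho\,\d\pi_{s\mu}$. Since $\rho>0$ everywhere, it suffices to prove $\nu=\pi_{t\mu}$: this simultaneously identifies $\ff{\d\pi_{t\mu}}{\d\pi_{s\mu}}=\rho$ and, by strict positivity of $\rho$, yields the reverse absolute continuity with density $1/\rho$, hence the claimed mutual absolute continuity.

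To show $\nu=\pi_{t\mu}$ I would appeal to the uniqueness of a probability measure with a prescribed Laplace transform, stated just after (\ref{LT}). The decisive point is that $\mu(X)<\infty$ forces the constant function $1$ into $L^1(\mu)\cap L^\infty(\mu)$, so that $\gg(X)=\<\gg,1\>$ and, for any $f\in L^1(\mu)\cap L^\infty(\mu)$, the function $g:=f+\log\ff{t}{s}$ again lies in $L^1(\mu)\cap L^\infty(\mu)$. Using $\e^{\<\gg,f\>}\rho(\gg)=\e^{(s-t)\mu(X)}\e^{\<\gg,g\>}$ and then (\ref{LT}) applied to $g$, one gets
$$
\nu\left(\e^{\<\cdot,f\>}\right)=\e^{(s-t)\mu(X)}\int_{\GG_X}\e^{\<\gg,g\>}\,\pi_{s\mu}(\d\gg)=\e^{(s-t)\mu(X)}\exp\left[\int_X(\e^g-1)\,s\,\d\mu\right].
$$
Since $\e^g=\ff{t}{s}\e^f$, one computes $\int_X(\e^g-1)s\,\d\mu=t\int_X\e^f\,\d\mu-s\mu(X)$, and substituting back the exponents collapse to
$$
\nu\left(\e^{\<\cdot,f\>}\right)=\exp\left[\int_X(\e^f-1)\,t\,\d\mu\right].
$$

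This is exactly the Laplace transform of $\pi_{t\mu}$ (the intensity $\mu$ replaced by $t\mu$). Taking $f=0$ gives $\nu(\GG_X)=1$, so $\nu$ is a probability measure, and the uniqueness statement then forces $\nu=\pi_{t\mu}$, completing the argument. The substance of the proof is the single reduction $\e^{\<\gg,f\>}\rho(\gg)=\e^{(s-t)\mu(X)}\e^{\<\gg,g\>}$, which is legitimate precisely because $\mu(X)<\infty$ places the constant $\log\ff{t}{s}$ inside the admissible class $L^1(\mu)\cap L^\infty(\mu)$ on which (\ref{LT}) is available; this finiteness is the only nontrivial hypothesis and the step I expect to require the most care. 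Intuitively, when $\mu(X)<\infty$ the measure $\pi_{s\mu}$ is obtained by sampling $\gg(X)$ from a Poisson law with mean $s\mu(X)$ and then placing that many points independently according to $\mu/\mu(X)$, independently of $s$; thus the parameter enters only through the total mass $\gg(X)$, and $\rho$ is merely the ratio of the two Poisson densities, which makes the form of the answer transparent.
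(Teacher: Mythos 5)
Your proposal is correct and follows essentially the same route as the paper: both verify the candidate density by matching Laplace transforms via the identity $\e^{\gg(f)}\rho(\gg)=\e^{(s-t)\mu(X)}\e^{\gg(f+\log\frac{t}{s})}$ and the characterization (\ref{LT}), the only difference being that you run the computation from $\rho\,\d\pi_{s\mu}$ toward $\pi_{t\mu}$ rather than the reverse. Your explicit remarks on excluding $s=0$ or $t=0$ and on deducing mutual absolute continuity from the strict positivity of $\rho$ are sound and slightly more careful than the paper's one-line conclusion.
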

\begin{proof}
For every $f\in L^1(\mu)\cap L^\infty(\mu)$, it follows from (\ref{LT}) that
\begin{eqnarray*}
\begin{aligned}
\int_{\GG_X}\e^{\gg(f)}\,\pi_{t\mu}(\d\gg)&=\exp\left[t\mu(\e^f-1)\right]\\
&=\e^{(s-t)\mu(X)}\exp\left[s\mu(\e^{f+\log\frac{t}{s}}-1)\right]\\
&=\e^{(s-t)\mu(X)}\int_{\GG_X}\e^{\gg(f+\log\frac{t}{s})}\,\pi_{s\mu}(\d\gg)\\
&=\int_{\GG_X}\e^{\gg(f)}\exp\left[\gg(X)\log\frac{t}{s}+(s-t)\mu(X)\right]\,\pi_{s\mu}(\d\gg),
\end{aligned}
\end{eqnarray*}
from which we obtain the desired result.
\end{proof}

Let $\scr B_b(\GG_X)$ be the family of bounded measurable functions on $\GG_X$. The following result,
proved in \cite{HP08} (see also \cite{MZ96} for more general result),
is crucial for the proof of Theorems \ref{PI} and \ref{WPI}. Here we shall give another easy proof, which can be regarded
as an interesting application of Lemma \ref{abso}.
\begin{lem}\label{dera}
If $\mu(X)<\infty$, then for every $F\in\scr B_b(\GG_X)$ we have
$$
\frac{\d}{\d s}\pi_{s\mu}(F)=\int_{\GG_X\times X}\big(F(\gg+\delta_x)-F(\gg)\big)\,\pi_{s\mu}(\d\gg)\mu(\d x),\quad s\in\R^+.
$$
\end{lem}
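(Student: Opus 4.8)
The plan is to exploit the explicit Radon--Nikodym density furnished by Lemma \ref{abso} in order to reduce the differentiation of $s\mapsto\pi_{s\mu}(F)$ to a differentiation under a \emph{fixed} reference integral, and then to identify the resulting expression with the claimed right-hand side via the Mecke identity (\ref{mecke}).

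First I would fix $s_0>0$. By Lemma \ref{abso}, for $s$ ranging in a neighbourhood of $s_0$ in $(0,\infty)$ we have $\pi_{s\mu}\ll\pi_{s_0\mu}$ with density
$$
\rho_s(\gg):=\frac{\d\pi_{s\mu}}{\d\pi_{s_0\mu}}(\gg)=\exp\left[\gg(X)\log\frac{s}{s_0}+(s_0-s)\mu(X)\right],
$$
so that $\pi_{s\mu}(F)=\int_{\GG_X}F(\gg)\rho_s(\gg)\,\pi_{s_0\mu}(\d\gg)$. The pointwise $s$-derivative of the density is
$$
\frac{\pp}{\pp s}\rho_s(\gg)=\left(\frac{\gg(X)}{s}-\mu(X)\right)\rho_s(\gg),
$$
which at $s=s_0$, where $\rho_{s_0}\equiv1$, equals $\gg(X)/s_0-\mu(X)$. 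Granting for the moment that one may differentiate under the integral sign, this gives
$$
\frac{\d}{\d s}\pi_{s\mu}(F)\Big|_{s=s_0}=\int_{\GG_X}F(\gg)\left(\frac{\gg(X)}{s_0}-\mu(X)\right)\pi_{s_0\mu}(\d\gg).
$$

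Next I would show that the claimed right-hand side equals exactly this quantity. Splitting it into its two summands and applying the Mecke identity (\ref{mecke}) for the intensity measure $s_0\mu$ (with $H(\gg,x)=F(\gg)$ independent of $x$) yields
$$
s_0\int_{\GG_X\times X}F(\gg+\delta_x)\,\pi_{s_0\mu}(\d\gg)\mu(\d x)=\int_{\GG_X}F(\gg)\,\gg(X)\,\pi_{s_0\mu}(\d\gg),
$$
so that the ``birth'' term is $s_0^{-1}\int_{\GG_X}F(\gg)\gg(X)\,\pi_{s_0\mu}(\d\gg)$, while the ``death'' term is simply $\mu(X)\pi_{s_0\mu}(F)$. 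Their difference coincides with the displayed derivative above, which completes the identification.

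The step I expect to be the main obstacle is the justification of differentiating under the integral sign, i.e. a domination of $\pp_s\rho_s$ that is uniform for $s$ near $s_0$. Here the hypothesis $\mu(X)<\infty$ is decisive: under $\pi_{s_0\mu}$ the variable $\gg(X)$ is Poisson distributed with finite parameter $s_0\mu(X)$, hence possesses exponential moments of every order. Restricting $s$ to a compact interval $[a,b]\subset(0,\infty)$ containing $s_0$, one has $\rho_s(\gg)\leq\e^{c\gg(X)+c'}$ for suitable constants $c,c'\geq0$ and $1/s\leq1/a$, whence
$$
\left|F(\gg)\,\pp_s\rho_s(\gg)\right|\leq\|F\|_u\left(\frac{\gg(X)}{a}+\mu(X)\right)\e^{c\gg(X)+c'},
$$
which is $\pi_{s_0\mu}$-integrable and independent of $s$; the standard differentiation-under-the-integral theorem (via dominated convergence) then applies on both sides of $s_0$. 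Finally, the boundary value $s=0$ would be handled by a one-sided argument, either through a direct computation of $\pi_{s\mu}(F)$ for small $s>0$ or by continuity, which I expect to be routine.
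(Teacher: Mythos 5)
Your proposal is correct and follows essentially the same route as the paper: both differentiate $s\mapsto\pi_{s\mu}(F)$ by means of the explicit Radon--Nikodym density from Lemma \ref{abso}, justify the interchange of limit and integral by dominated convergence, and identify the resulting expression $\int F(\gg)(\gg(X)/s-\mu(X))\,\pi_{s\mu}(\d\gg)$ with the claimed right-hand side via the Mecke identity. Your treatment is in fact somewhat more careful than the paper's on the domination step (exponential moments of the Poisson variable $\gg(X)$) and on the one-sided case $s=0$.
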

\begin{proof}
By Lemma \ref{abso}, for any $s\in\R^+$ and $\varepsilon\geq-s$, one has
$$
\frac{\d\pi_{(s+\varepsilon)\mu}}{\d\pi_{s\mu}}(\gg)=\exp\left[\gg(X)\log\frac{s+\varepsilon}{s}-\varepsilon\mu(X)\right]
$$
for $\pi_{s\mu}$-a.e. $\gg\in\GG_X$. Combining this with the dominated convergence theorem, we arrive at
\begin{eqnarray*}
\begin{aligned}
\frac{\d}{\d s}\pi_{s\mu}(F)&=\lim_{\varepsilon\rightarrow0}\frac{1}{\varepsilon}
\left\{\int_{\GG_X} F(\gg)\,\pi_{(s+\varepsilon)\mu}(\d\gg)-\int_{\GG_X} F(\gg)\,\pi_{s\mu}(\d\gg)\right\}\\
&=\lim_{\varepsilon\rightarrow0}\frac{1}{\varepsilon}
\int_{\GG_X} F(\gg)\left\{\exp\left[\gg(X)\log\frac{s+\varepsilon}{s}-\varepsilon\mu(X)\right]-1\right\}\,\pi_{s\mu}(\d\gg)\\
&=\int_{\GG_X} F(\gg)\lim_{\varepsilon\rightarrow0}\frac{1}{\varepsilon}
\left\{\exp\left[\gg(X)\log\frac{s+\varepsilon}{s}-\varepsilon\mu(X)\right]-1\right\}\,\pi_{s\mu}(\d\gg)\\
&=\int_{\GG_X} F(\gg)\left(\frac{1}{s}\gg(X)-\mu(X)\right)\,\pi_{s\mu}(\d\gg)\\
&=\frac1s\int_{\GG_X\times X}F(\gg)\,\gg(\d x)\pi_{s\mu}(\d\gg)-\int_{\GG_X\times X}F(\gg)\,\pi_{s\mu}(\d\gg)\mu(\d x)\\
&=\int_{\GG_X\times X}\big(F(\gg+\delta_x)-F(\gg)\big)\,\pi_{s\mu}(\d\gg)\mu(\d x),
\end{aligned}
\end{eqnarray*}
where in the last step we have used the Mecke identity (\ref{mecke}) for $H(\gg,\cdot)=F(\gg)$.
\end{proof}

\begin{proof}[Proof of Theorem \ref{PI}]
By an approximation argument, we may and do assume that $F\in\scr B_b(\GG_X)$.
Recalling the Poincar\'{e} inequality for birth-death type Dirichlet form w.r.t. the Poisson measure (see \cite[Remark 1.4]{Wu00}), we have
$$
\pi_{s\mu}(F^2)\leq\pi_{s\mu}(F)^2+\int_{\GG_X\times X}(F(\gg+\delta_x)-F(\gg))^2\,\pi_{s\mu}(\d\gg)s\mu(\d x),\quad s\in\R^+.
$$
This yields that
\begin{eqnarray}\label{mm}
\begin{aligned}
\pi_{\lambda,\mu}(F^2)-\pi_{\lambda,\mu}(F)^2&=\int_{\R^+}\pi_{s\mu}(F^2)\,\lambda(\d s)
-\left(\int_{\R^+}\pi_{s\mu}(F)\,\lambda(\d s)\right)^2\\
&\leq\EE(F,F)+\int_{\R^+}\pi_{s\mu}(F)^2\,\lambda(\d s)-\left(\int_{\R^+}\pi_{s\mu}(F)\,\lambda(\d s)\right)^2.
\end{aligned}
\end{eqnarray}
Applying (\ref{con1}) with $f(s)=\pi_{s\mu}(F)$, and using Lemma \ref{dera} and the Cauchy-Schwartz inequality we obtain
\begin{eqnarray*}
\begin{aligned}
&\int_{\R^+}\pi_{s\mu}(F)^2\,\lambda(\d s)-\left(\int_{\R^+}\pi_{s\mu}(F)\,\lambda(\d s)\right)^2\\
&\leq C\int_{\R^+}s\left|\frac{\d}{\d s}\pi_{s\mu}(F)\right|^2\,\lambda(\d s)\\
&=C\int_{\R^+}s\left(\int_{\GG_X\times X}(F(\gg+\delta_x)-F(\gg))\,\pi_{s\mu}(\d\gg)\mu(\d x)\right)^2\,\lambda(\d s)\\
&\leq C\mu(X)\EE(F,F).
\end{aligned}
\end{eqnarray*}
Combining this with (\ref{mm}), the desired Poincar\'{e} inequality for $(\EE,\D(\EE))$ follows.
\end{proof}

\bigskip

\begin{proof}[Proof of Theorem \ref{WPI}]
Similarly as in the proof of Theorem \ref{PI}, we assume that $F\in\scr B_b(\GG_X)$.
Since $\|\pi_{\cdot\mu}(F)\|_{u}\leq\|F\|_{u}$, it holds from (\ref{con2}), Lemma \ref{dera} and the Cauchy-Schwartz inequality that
\begin{eqnarray*}
\begin{aligned}
&\int_{\R^+}\pi_{s\mu}(F)^2\,\lambda(\d s)-\left(\int_{\R^+}\pi_{s\mu}(F)\,\lambda(\d s)\right)^2\\
&\leq\alpha(r)\int_{\R^+}s\left|\frac{\d}{\d s}\pi_{s\mu}(F)\right|^2\,\lambda(\d s)+r\|F\|_{u}^2\\
&\leq\mu(X)\alpha(r)\EE(F,F)+r\|F\|_{u}^2,\quad r>0.
\end{aligned}
\end{eqnarray*}
Hence we finish the proof by combining this with (\ref{mm}).
\end{proof}

To prove Proposition \ref{PROP}, we need the following fundamental lemma. We
include a simple proof for completeness.

\begin{lem}\label{ss}
Assume that $\int_{\R^+}s^2\lambda(\d s)<\infty$. Then for $F(\gg)=\gg(f)$, where $f\in L^1(\mu)\cap L^\infty(\mu)$
and $f\geq0$ $\mu$-a.e., we have
$$
\pi_{\lambda,\mu}(F)=\mu(f)\int_{\R^+}s\,\lambda(\d s),\quad
\pi_{\lambda,\mu}(F^2)=\mu(f)^2\int_{\R^+}s^2\,\lambda(\d s)+\mu(f^2)\int_{\R^+}s\,\lambda(\d s).
$$
\end{lem}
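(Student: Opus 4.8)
The plan is to exploit the very definition $\pi_{\lambda,\mu}=\int_{\R^+}\pi_{s\mu}\,\lambda(\d s)$, which reduces both moments of $F(\gg)=\gg(f)$ under $\pi_{\lambda,\mu}$ to the corresponding moments under the pure Poisson measure $\pi_{s\mu}$; these are nothing but the first two moments of a Poisson integral, and I would derive them cleanly from the Mecke identity (\ref{mecke}) applied with the intensity $s\mu$ in place of $\mu$. For the first moment I would feed the function $H(\gg,x)=f(x)$ (independent of $\gg$) into (\ref{mecke}). The right-hand side becomes $\int_{\GG_X}\big(\int_X f\,\d\gg\big)\,\pi_{s\mu}(\d\gg)=\pi_{s\mu}(F)$, while the left-hand side collapses to $\int_X f(x)\,(s\mu)(\d x)=s\mu(f)$ because $\pi_{s\mu}$ is a probability measure; hence $\pi_{s\mu}(F)=s\mu(f)$.

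For the second moment the key is the elementary identity $\gg(f)^2=\int_X \gg(f)\,f(x)\,\gg(\d x)$, which puts $\pi_{s\mu}(F^2)$ into the exact shape of the right-hand side of (\ref{mecke}) for the choice $H(\gg,x)=f(x)\,\gg(f)$. Applying the Mecke identity and using the additivity relation $(\gg+\dd_x)(f)=\gg(f)+f(x)$, I get $H(\gg+\dd_x,x)=f(x)\gg(f)+f(x)^2$. Integrating the two resulting terms against $\pi_{s\mu}(\d\gg)(s\mu)(\d x)$ and inserting the already-computed first moment $\pi_{s\mu}(F)=s\mu(f)$ produces $s^2\mu(f)^2$ from the first term and $s\mu(f^2)$ from the second, so that $\pi_{s\mu}(F^2)=s^2\mu(f)^2+s\mu(f^2)$.

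The last step is simply to integrate the two identities $\pi_{s\mu}(F)=s\mu(f)$ and $\pi_{s\mu}(F^2)=s^2\mu(f)^2+s\mu(f^2)$ against $\lambda(\d s)$, which gives exactly the two stated formulas. The only delicate points are justifying the two applications of the Mecke identity and the interchange of the $s$-integration with the $\gg$- and $x$-integrations; this is where the hypotheses do their work. The assumption $f\geq0$ makes every integrand nonnegative, so Tonelli's theorem licenses all the interchanges without any separate integrability check, and the conditions $f\in L^1(\mu)\cap L^\infty(\mu)$ guarantee $\mu(f)<\infty$ and $\mu(f^2)\leq\|f\|_\infty\,\mu(f)<\infty$, while $\int_{\R^+}s^2\,\lambda(\d s)<\infty$ (together with $\int_{\R^+}s\,\lambda(\d s)<\infty$, which follows from it by Cauchy--Schwarz since $\lambda$ is a probability measure) ensures that the final $\lambda$-integrals converge. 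I expect this bookkeeping of finiteness, rather than any genuine computation, to be the main thing to handle with care.
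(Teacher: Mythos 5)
Your proof is correct, but it takes a genuinely different route from the paper. The paper works from the Laplace transform (\ref{LT}): for $\varepsilon\leq 0$ it writes $\int_{\GG_X}\e^{\varepsilon\gg(f)}\,\pi_{\lambda,\mu}(\d\gg)=\int_{\R^+}\exp[s\mu(\e^{\varepsilon f}-1)]\,\lambda(\d s)$, differentiates twice in $\varepsilon$ to obtain expressions for $\int\e^{\varepsilon\gg(f)}\gg(f)\,\d\pi_{\lambda,\mu}$ and $\int\e^{\varepsilon\gg(f)}\gg(f)^2\,\d\pi_{\lambda,\mu}$, and then sets $\varepsilon=0$; the hypotheses $f\geq0$ and $\int s^2\,\lambda(\d s)<\infty$ are there to control the differentiation under the integral sign and the limit $\varepsilon\to 0^-$. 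You instead compute the first two moments of $\gg(f)$ under the pure Poisson measure $\pi_{s\mu}$ directly from the Mecke identity (\ref{mecke}) --- with $H(\gg,x)=f(x)$ for the mean and $H(\gg,x)=f(x)\gg(f)$ together with $\gg(f)^2=\int_X\gg(f)f(x)\,\gg(\d x)$ and $(\gg+\dd_x)(f)=\gg(f)+f(x)$ for the second moment --- and then integrate the resulting identities $\pi_{s\mu}(F)=s\mu(f)$ and $\pi_{s\mu}(F^2)=s^2\mu(f)^2+s\mu(f^2)$ against $\lambda$. Both computations are sound. Your version has the advantage of staying entirely within the toolkit the paper has already set up (the Mecke identity is the engine of Section 2) and of requiring only Tonelli for nonnegative integrands, so no differentiation under the integral sign needs to be justified; the paper's version is shorter on the page but leans on an unjustified interchange of $\ff{\d}{\d\varepsilon}$ with the integral. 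Your observation that $\int_{\R^+}s\,\lambda(\d s)<\infty$ follows from $\int_{\R^+}s^2\,\lambda(\d s)<\infty$ by Cauchy--Schwarz, since $\lambda$ is a probability measure, correctly disposes of the remaining finiteness bookkeeping.
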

\begin{proof}
Let $\varepsilon\leq0$. It follows from (\ref{LT}) that
$$
\int_{\GG_X}\e^{\varepsilon\gg(f)}\,\pi_{\lambda,\mu}(\d\gg)=\int_{\R^+}\exp\left[s\int_X\left(\e^{\varepsilon f}-1\right)\,\d\mu\right]\,\lambda(\d s).
$$
Since $\varepsilon\leq0$, $f\geq0$ and $\int_{\R^+}s^2\lambda(\d s)<\infty$, this implies that
$$
\int_{\GG_X}\e^{\varepsilon\gg(f)}\gg(f)\,\pi_{\lambda,\mu}(\d\gg)
=\int_{\R^+}s\exp\left[s\int_{X}\left(\e^{\varepsilon f}-1\right)\,\d\mu\right]\,\lambda(\d s)\times\int_{X}f\e^{\varepsilon f}\,\d\mu
$$
and
\begin{eqnarray*}
\begin{aligned}
\int_{\GG_X}\e^{\varepsilon\gg(f)}\gg(f)^2\,\pi_{\lambda,\mu}(\d\gg)&=\int_{\R^+}s^2\exp\left[s\int_{X}\left(\e^{\varepsilon f}-1\right)\,\d\mu\right]\,\lambda(\d s)\times\left(\int_{X}f\e^{\varepsilon f}\,\d\mu\right)^2\\
&\quad+\int_{\R^+}s\exp\left[s\int_{X}\left(\e^{\varepsilon f}-1\right)\,\d\mu\right]\,\lambda(\d s)\times\int_{X}f^2\e^{\varepsilon f}\,\d\mu.
\end{aligned}
\end{eqnarray*}
Now we finish the proof by taking $\varepsilon=0$ in these two equalities.
\end{proof}

\begin{proof}[Proof of Proposition \ref{PROP}]
Suppose that (\ref{PTI}) holds with $C_1<1$ and some $C_2$. Fix a nonnegative
$f\in L^1(\mu)\cap L^\infty(\mu)$ and let $F(\gg)=\gg(f)$. Then we deduce from (\ref{PTI}) and Lemma \ref{ss} that
$$
\mu(f)^2\int_{\R^+}s^2\,\lambda(\d s)+\mu(f^2)\int_{\R^+}s\,\lambda(\d s)
\leq C_1\mu(f^2)\int_{\R^+}s\,\lambda(\d s)+C_2\mu(f)^2\left(\int_{\R^+}s\,\lambda(\d s)\right)^2,
$$
which yields that
$$
\left[(1-C_1)\int_{\R^+}s\,\lambda(\d s)\right]\mu(f^2)
\leq\left[C_2\left(\int_{\R^+}s\,\lambda(\d s)\right)^2-\int_{\R^+}s^2\,\lambda(\d s)\right]\mu(f)^2
$$
holds for any nonnegative $f\in L^1(\mu)\cap L^\infty(\mu)$.
Noting that $(1-C_1)\int_{\R^+}s\,\lambda(\d s)\in(0,\infty)$, this is impossible according to the assumption that we can choose a sequence $\{A_n\}_{n\geq1}$ such that $\mu(A_n)>0$
but $\mu(A_n)\rightarrow0$ as $n\rightarrow\infty$.
\end{proof}


\begin{thebibliography}{99}




\bibitem{AKR98a}  Aibeverio S, Kondratiev Y G,  R\"{o}ckner M. Analysis and geometry on configuration spaces. J Funct Anal, 1998, 154: 444-500.


\bibitem{DS11} Deng C S, Song Y H. An elementary proof of the $L^1$ log-Sobolev inequality for Poisson
point processes. Statist Probab Lett, 2011, 81: 1458-1462.


\bibitem{DW10} Deng C S, Wang F Y. Exponential convergence rates of second quantization semigroups and applications. Accessible on arXiv:1012.5689.



\bibitem{HP08} Houdr\'{e} C, Privault N. Isoperimetric and related bounds on configuration spaces. Statist Probab Lett, 2008, 78: 2154-2164.





\bibitem{MR92} Ma Z M, R\"ockner M. Introduction to the Theory of (Non-Symmetric) Dirichlet Forms. Berlin: Springer-Verlag, 1992.


\bibitem{Mec67} Mecke J. Stationaire Zuf\"{a}llige Ma$\beta$e auf
lokalkompakten abelschen Gruppen. Z Wahrsch verw Geb, 1967, 9: 36-58.



\bibitem{MZ96} M{\o}ller J, Zuyev S. Gamma-type results and other related properties of Poisson processes. Adv Appl Prob, 1996, 28: 662-673.


\bibitem{OOSM10} Oliveira M J, Ouerdiane H, Silva J L, Vilela Mendes R. The fractional Poisson measure
in infinite dimensions. Accessible on arXiv:1002.2124v1.



\bibitem{Res87} Resnick S I. Extreme Values, Regular Variation, and Point Processes. Berlin, Heidelberg, New York: Springer, 1987.

\bibitem{Roe98} R\"{o}ckner M. Stochastic analysis on configuration spaces: 
basic ideas and recent results. In: New Directions in Dirichlet
forms. AMS/IP Stud Adv Math 8, 1998, 157-231.



\bibitem{RW01} R\"ockner M, Wang F Y. Weak Poincar\'e inequalities and $L^2$-convergence rates of Markov semigroups. 
J Funct Anal, 2001, 185: 564-603.



\bibitem{Shi94} Shimomura H. Poisson measures on configuration space and unitary representation of the group
of diffeomorphisms. J Math Kyoto Univ, 1994, 34: 599-614.


\bibitem{Sur84} Surgailis D. On the multiple Poisson stochastic integrals and associated Markov semigroups. Probab Math Statist, 1984, 3: 217-239.



\bibitem{WY10} Wang F Y, Yuan C. Poincar\'{e} inequality on the path space of Poisson point processes. J Theor Probab, 2010, 23: 824-833.



\bibitem{Wu00} Wu L. A new modified logarithmic Sobolev inequality for
Poisson point processes and several applications. Probab Theory Relat Fields, 2000, 118: 427-438.


 \end{thebibliography}
\end{document}